\date{}
\renewcommand{\uppercasenonmath}[1]{}
\theoremstyle{plain}
\newtheorem{theorem}{Theorem}[section]
\newtheorem{proposition}[theorem]{Proposition}
\newtheorem{lemma}[theorem]{Lemma}
\newtheorem{example}[theorem]{Example}
\newtheorem*{question}{Question}
\theoremstyle{definition}
\theoremstyle{remark}
\newtheorem{remark}[theorem]{Remark}
\newcommand{\Tor}{\mbox{\rm Tor}}
\newcommand{\prodi}{\prod_{i\in I}}
\def\Hom{{\rm Hom}}
\def\Tor{{\rm Tor}}
\def\Ker{{\rm Ker}}
\def\Im{{\rm Im}}
\def\Coker{{\rm Coker}}
\begin{document}
\begin{center}
{\large  \bf A note on $S$-flat preenvelopes}

\vspace{0.5cm} Xiaolei Zhang$^{a}$
%\bigskip

{\footnotesize  $a$.\ School of Mathematics and Statistics, Shandong University of Technology, Zibo 255049, China\\}
\end{center}
%\begin{figure}[b]
%\rule[-2.5truemm]{5cm}{0.1truemm}\\[2mm]
%{\small }
%\end{figure}

%\begin{figure}[b]
%\rule[-2.5truemm]{5cm}{0.1truemm}\\[2mm]
%{\small }
%\end{figure}
\bigskip
\centerline { \bf  Abstract}
\bigskip
\leftskip10truemm \rightskip10truemm \noindent

 In this note, we investigate the notion of $S$-flat preenvelopes of modules. In particular, we give an example that a ring $R$ being coherent does not imply that every $R$-module have an $S$-flat preenvelope, giving a negative answer to the question proposed by Bennis and Bouziri \cite{BB24}. Besides, we also show that $R_S$ is a coherent ring also does not imply that $R$ is  an $S$-coherent ring in general.
\vbox to 0.3cm{}\\
{\it Key Words:}    $S$-coherent ring, $S$-flat  module, $S$-flat preenvelope.\\
{\it 2020 Mathematics Subject Classification:}  13C11.

\leftskip0truemm \rightskip0truemm
\bigskip
%\section { \bf Introduction    }
%\bigskip

\section{Introduction}
Throughout this note, $R$ is always  a commutative ring with an identity and $S$ is always a multiplicative subset of $R$, that is, $1\in S$ and $s_1s_2\in S$ for any $s_1\in S$ and any $s_2\in S$.

Recall that a ring $R$ is said to be  \emph{coherent} if every finitely generated ideal is finitely presented. For the last few decades, characterizing coherent rings in terms of modules has drawn lots of attention from many algebraists. In 1960, Chase \cite{c} proved that a ring $R$ is coherent if and only if every direct product of flat modules is flat. In 1981, Enochs \cite{E81} obtained that a ring $R$ is coherent if and only if every  $R$-module has a flat preenvelope. Other characterizations of coherent rings in terms of absolutely pure modules can be found in \cite{DD18,DD19,S70}.

In 2002, Anderson \cite{ad02} introduced the notions of $S$-finite modules and $S$-Noetherian rings.  An $R$-module $M$ is said to be \emph{$S$-finite} provided that there is a finitely generated submodule $F$ of $M$ such that $sM\subseteq F$ with some $s\in S$;  and a ring $R$ is called an \emph{$S$-Noetherian ring} if every ideal of $R$ is $S$-finite.  In 2018, Bennis and Hajoui \cite{bh18} introduced the notions of $S$-finitely presented modules and $S$-coherent rings which can be seen as $S$-versions of finitely presented modules and coherent rings. Recently, Bennis and Bouziri \cite{BB24} investigated the $S$-flat cotorsion pairs, and showed that the class of  $S$-flat $R$-modules is preenveloping implies that the basic ring $R$ is $S$-coherent. Subsequently, they  \cite{BB24} asked if the converse of the above result is true?
\begin{question} \cite{BB24}
	Suppose $R$ is an $S$-coherent ring. Does every $R$-module have an $S$-flat preenvelope? 
\end{question}

The main motivation of this note is to investigate this question.  Indeed, we show that if the multiplicative set $S$  satisfies the maximal condition, then the answer is positive (see Theorem \ref{maxcond}). However, we use the  valuation theory to gave a negative answer to this question in general (see Example \ref{vd}). Besides, we show that $R_S$ is a coherent ring also does not imply that $R$ is  an $S$-coherent ring in general  (see Example \ref{s-cohRs}).

\section{main results}
Recall from  Bennis and Hajoui \cite{bh18} that an $R$-module $M$ is said to be \emph{$S$-finitely presented} provided  there exists
 an exact sequence of $R$-modules $$0 \rightarrow K\rightarrow F\rightarrow M\rightarrow 0,$$ where $K$ is
 $S$-finite and $F$ is  finitely generated free. An $R$-module $M$ is said to be \emph{$S$-coherent} if it is finitely generated and every finitely generated submodule of $M$ is $S$-finitely presented. A ring $R$ is called an \emph{$S$-coherent ring} if $R$ itself is an $S$-coherent $R$-module, equivalently, every finitely generated ideal of $R$ is $S$-finitely presented.

 The authors \cite{bh18} gave the Chase's result for $S$-coherent rings:
 
\begin{lemma}\label{chaseresu} \cite[Theorem 3.8]{bh18} Let $R$ be a ring. Then the following assertions are equivalent.
\begin{enumerate}
	\item  $R$ is $S$-coherent.
	\item $(I:_Ra)$ is an $S$-finite ideal of every $a\in R$ and $R$ for every finitely generated ideal $I$ of $R$.
	\item	$(0:_Ra)$ is an $S$-finite ideal of $R$ for every $a\in R$ and the intersection of	two finitely generated ideals of $R$ is an $S$-finite ideal of $R.$	
\end{enumerate}
\end{lemma}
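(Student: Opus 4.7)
I would prove the equivalences by establishing the cycle $(1)\Ra(2)\Ra(3)\Ra(1)$. Before starting, I would record two routine closure properties of $S$-finite modules that will be used throughout: any quotient of an $S$-finite module is $S$-finite (the image of the distinguished fg submodule still absorbs), and any extension of two $S$-finite modules is $S$-finite (take the product of the two absorbing elements of $S$, and augment the fg submodule of the subobject by lifts of the fg submodule of the quotient). I would also note a Schanuel-style consequence of the definition: if $M$ is $S$-finitely presented, then for \emph{every} short exact sequence $0\to K\to F\to M\to 0$ with $F$ a fg free module, the kernel $K$ is $S$-finite (use Schanuel's lemma together with the fact that a direct summand of an $S$-finite module is $S$-finite).

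For $(1)\Ra(2)$, given $a\in R$ and $I=(x_1,\dots,x_n)$, I would consider the map $\psi:R^{n+1}\to R$ sending $(r_0,r_1,\dots,r_n)\mapsto r_0 a-\sum_{i=1}^n r_i x_i$, whose image is the fg ideal $Ra+I$. Projection onto the first coordinate gives a surjection $\Ker\psi\twoheadrightarrow (I:_R a)$, since $r_0\in (I:_R a)$ exactly when $r_0 a$ can be written as $\sum r_i x_i$ for some choice of $r_i$. By $(1)$, $Ra+I$ is $S$-finitely presented, so the Schanuel remark above shows $\Ker\psi$ is $S$-finite, and then the surjection gives $(I:_R a)$ is $S$-finite.

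The implication $(2)\Ra(3)$ reduces, upon taking $I=0$, to showing $I\cap J$ is $S$-finite for fg ideals $I$ and $J=(b_1,\dots,b_n)$. I would induct on $n$. The base case $n=1$ follows from $I\cap Rb=(I:_R b)\cdot b$, a surjective image of the $S$-finite ideal $(I:_R b)$. For the inductive step with $J'=(b_1,\dots,b_{n-1})$ and $b=b_n$, the crucial computation is the modular-law identity $(I\cap J)+J'=(I+J')\cap J$, which, after mapping into $J/J'\cong R/(J':_R b)$, yields a canonical isomorphism $(I\cap J)/(I\cap J')\cong((I+J'):_R b)/(J':_R b)$. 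The numerator is $S$-finite by $(2)$ applied to the fg ideal $I+J'$ and the element $b$, so the quotient is $S$-finite; combining with $I\cap J'$ being $S$-finite by the inductive hypothesis and the extension-closure property delivers the result.

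For $(3)\Ra(1)$, I would show every fg ideal $I$ is $S$-finitely presented by induction on the number of generators. For $I=Ra$, the sequence $0\to(0:_R a)\to R\to Ra\to 0$ together with $(3)$ does the job. For the inductive step, write $I=J+Ra$ with $J$ having one fewer generator and, by induction, $S$-finitely presented; splicing the presentations of $J$ and $Ra$ gives a surjection $R^{n+1}\twoheadrightarrow J\oplus Ra$ with $S$-finite kernel, and composing with the standard surjection $J\oplus Ra\twoheadrightarrow I$ (whose kernel is $J\cap Ra$, which is $S$-finite by $(3)$) produces a surjection $R^{n+1}\twoheadrightarrow I$ whose kernel is an extension of two $S$-finite modules, hence $S$-finite. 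The single delicate point in the whole argument is the modular-law identification $(I\cap J)/(I\cap J')\cong((I+J'):_R b)/(J':_R b)$ appearing in $(2)\Ra(3)$; once that is isolated, closure of $S$-finite modules under quotients and extensions carries the rest of the proof through routinely.
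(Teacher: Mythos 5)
This lemma is quoted from \cite[Theorem 3.8]{bh18}; the paper under review gives no proof of its own, so there is nothing internal to compare against. Your argument is correct and complete: the closure of $S$-finite modules under quotients, extensions and direct summands, the Schanuel-type independence of the presentation, the kernel-projection trick for $(1)\Ra(2)$, the modular-law identification $(I\cap J)/(I\cap J')\cong((I+J'):_Rb)/(J':_Rb)$ for $(2)\Ra(3)$, and the splicing of presentations for $(3)\Ra(1)$ all check out. This is exactly the $S$-relative version of the classical Chase/Glaz argument for coherent rings, which is also the route taken in the cited source, so your proof is essentially the standard one.
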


Recall that a multiplicative subset $S$ of $R$ is said to satisfy the maximal multiple condition if there
 exists an $t\in S$ such that $s|t$ for each $s\in S$. Both finite multiplicative subsets and the  multiplicative subsets that consist of units  satisfy the maximal multiple condition. The following result is of its own interests. 
 
 \begin{lemma}\label{mfp} Let  $S$ be a  multiplicative subset of a ring $R$. If $S$ satisfies the maximal condition, then $R_S$  is a finitely presented  $R$-module. If,  moreover, $S$ is regular, the converse is also true.
 \end{lemma}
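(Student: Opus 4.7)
The plan is to handle the two directions separately, with the forward direction hinging on extracting an idempotent from the ``maximal'' element of $S$. Assuming $S$ has the maximal multiple condition, pick $t\in S$ with $s\mid t$ for every $s\in S$. Since $t^{2}\in S$ we in particular have $t^{2}\mid t$, so $t=t^{2}u$ for some $u\in R$. I would then set $e:=tu$ and verify the two identities $e^{2}=e$ and $te=t$ (so $t(1-e)=0$); these drive the rest of the argument.

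Next I would identify $R_{S}$ with $R_{T}$ as $R$-modules, where $T=\{1,t,t^{2},\dots\}$. One direction is automatic from $T\subseteq S$; for the reverse one uses that every $s\in S$ divides $t$, which gives both surjectivity of the canonical map $R_{T}\to R_{S}$ and the equality of $S$-torsion with $t$-torsion (so the kernels agree). Then, using $(tu)^{n}=e^{n}=e$ and $t(1-e)=0$, I would show that the map $r\mapsto r/1$ defines an $R$-module isomorphism $eR\xrightarrow{\sim}R_{T}$, with inverse $r'/t^{n}\mapsto r'eu^{n}$. Since $eR\cong R/(1-e)R$ is cyclic with principal annihilator, $R_{S}$ is finitely presented.

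For the converse, assume $S$ is regular and $R_{S}$ is finitely presented, so in particular finitely generated, say by $r_{1}/s_{1},\dots,r_{n}/s_{n}$. Set $t:=s_{1}\cdots s_{n}\in S$; clearing denominators shows every element of $R_{S}$ admits an expression with denominator $t$. Given $s\in S$, writing $1/s=b/t$ in $R_{S}$ yields some $s'\in S$ with $s'(t-sb)=0$ in $R$, and regularity forces the non-zero-divisor $s'$ out of the equation, so $t=sb$ in $R$, i.e.\ $s\mid t$. This confirms the maximal multiple condition.

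The main obstacle I anticipate lies in the forward direction, specifically in verifying that $R_{S}\cong R/(1-e)R$ as an $R$-module rather than merely as a ring. The difficulty is bookkeeping: showing $\ker(R\to R_{T})$ is exactly $(1-e)R$ and that every fraction $r/t^{n}$ is genuinely represented by an element of $eR$. Both reductions turn on the two identities from the first step, but sequencing them correctly (especially the passage $u^{n}t^{n}=e$, which is where idempotence really bites) is the only nontrivial part of the argument.
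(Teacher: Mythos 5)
Your proof is correct, and the forward direction is essentially the paper's argument: both extract the idempotent $e=tu$ from $t=t^2u$ and realize $R_S$ as the direct summand $eR$ of $R$. The differences are worth noting. First, the paper reduces to the case where $S$ is saturated so that the witness $a$ in $t=t^2a$ lies in $S$; you correctly observe that this is unnecessary, since $t^2\mid t$ already gives $t=t^2u$ with $u\in R$, and the identities $e^2=e$, $te=t$ are all that the rest of the argument uses. You also verify the isomorphism $eR\cong R_T\cong R_S$ by hand (via the inverse $r'/t^n\mapsto r'eu^n$, using $u^nt^n=e$), where the paper cites the chain $R_S\cong R_t\cong R_{ta}\cong Rta$; both are fine. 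Second, your converse is genuinely more elementary: the paper upgrades ``finitely presented'' to ``finitely generated projective'' (using that $R_S$ is flat) and invokes the dual basis lemma to produce the common denominator $t=s_1\cdots s_n$, whereas you get the same $t$ from mere finite generation by clearing denominators in a generating set, and then use regularity to cancel $s'$ from $s'(t-sb)=0$. This shows the converse holds under the weaker hypothesis that $R_S$ is a finitely generated $R$-module, which is a small but real strengthening of the statement as written.
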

 \begin{proof}  We may assume $S$ is saturated, i.e., $s_1s_2\in S$ implies $s_i \in S$ for each $i=1,2$. 
 	Suppose $S$ satisfies the maximal condition, that is, there exist $t\in S$ such that $s|t$ for any $s\in S$. Then $t^2|t$, and so $t=t^2a$ for some $a\in S$ as $S$ is saturated. So $ta=(ta)^2$, that is, $ta$ is an idempotent, and so is $1-ta$. Therefore $R\cong Rta\oplus R(1-ta)$. Hence $R_S\cong R_t\cong R_{ta}\cong R(1-ta)$ which is a direct summand of $R$. Hence $R_S$ is a finitely generated projective $R$-module, and so is finitely presented.
 	
 	Suppose $R_S$  is a finitely presented (flat) $R$-module. Then $R_S$ is finitely generated projective. It follows by \cite[Theorem 2.3.6]{fk16} that there exists $\{\frac{r_i}{s_i},f_i\mid i=1,\dots,n\}$   with each $\frac{r_i}{s_i}\in R_S$ and $f_i\in\Hom_R(R_S,R)$ such that  $$\frac{x}{s}=\sum\limits_{i=1}^n\frac{r_i}{s_i}f_i(\frac{x}{s})$$ for any $\frac{x}{s}\in R_S$. Set $t=s_1\cdots s_n$. Then for any $s\in S$, $\frac{1}{s}=\sum\limits_{i=1}^n\frac{f_i(\frac{1}{s})r_i}{s_i}=\frac{a}{t}$ for some $a\in R$. 	Since $S$ is regular and saturated, so $t=sa\in S$ with $a\in S$. Hence  $s|t$, that is, $S$ satisfies the maximal condition.
 \end{proof}

It follows by \cite[Remark 3.4]{B24} that if $R$ is  an $S$-coherent ring, then $R_S$ is a coherent ring. Subsequently, they ask if the converse is also true. We will show the converse is true under the maximal multiple condition.

\begin{proposition}
Let	$S$ be a  multiplicative subset of a ring $R$ satisfying the maximal multiple condition. If $R_S$ is a coherent ring, then $R$ is  an $S$-coherent ring.
\end{proposition}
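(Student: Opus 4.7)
The plan is to exploit the ring decomposition of $R$ that is implicit in the proof of Lemma \ref{mfp}. That proof produces $t\in S$ and $a\in S$ such that $ta$ is an idempotent of $R$, inducing a decomposition $R\cong R_1\times R_2$ of rings, one of whose summands is $R_S$; write $R_1\cong R_S$ and let $R_2$ be the complementary summand. The crucial extra observation I would add is that $R_2$ is annihilated by $t\in S$: the identity $t=t^2a$ used in the proof of Lemma \ref{mfp} gives $t(1-ta)=0$, whence $tR_2=0$. In particular every $R$-submodule of $R_2$ is automatically $S$-finite, with witnesses $t\in S$ and the zero submodule.

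With this in hand, I would verify the Chase-type criterion in Lemma \ref{chaseresu}(3): it suffices to show that $(0:_R x)$ is $S$-finite for every $x\in R$ and that $I\cap J$ is $S$-finite for every pair of finitely generated ideals $I,J$ of $R$. Since the two factors of $R\cong R_1\times R_2$ are orthogonal, each element $x\in R$ decomposes as $x=x_1+x_2$ with $x_i\in R_i$, and each finitely generated ideal $I$ of $R$ decomposes as $I=I_1\oplus I_2$ with $I_i$ a finitely generated ideal of $R_i$ (produced by multiplying generators of $I$ by the complementary idempotents). A short calculation then gives
\[
(0:_R x)=(0:_{R_1}x_1)\oplus(0:_{R_2}x_2),\qquad I\cap J=(I_1\cap J_1)\oplus(I_2\cap J_2).
\]
For the $R_1$-components, coherence of $R_1\cong R_S$ together with Chase's classical theorem yields that $(0:_{R_1}x_1)$ and $I_1\cap J_1$ are finitely generated ideals of $R_1$, hence finitely generated as $R$-modules via the surjection $R\twoheadrightarrow R_1$, and therefore $S$-finite. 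The $R_2$-components sit inside $R_2$, so they are killed by $t$ and again $S$-finite. Since the direct sum of two $S$-finite $R$-modules is visibly $S$-finite (take the product of the two witnesses in $S$ and the direct sum of the two finitely generated submodules), both clauses of Lemma \ref{chaseresu}(3) are verified, and $R$ is $S$-coherent.

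The step I expect to require the most care is purely organizational: one must check that a finitely generated ideal of $R_1\times R_2$ really splits into a pair of finitely generated ideals of the two factors, and that the $S$-finiteness property adds up along such a direct sum. No deeper obstacle arises, because the maximal multiple condition effectively upgrades the localization $R\to R_S$ to a split ring retraction onto a direct factor, reducing the $S$-coherence of $R$ to ordinary coherence of $R_S$ together with a trivial $t$-torsion statement on the complementary summand.
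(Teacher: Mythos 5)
Your proof is correct, but it takes a genuinely different route from the paper's. The paper argues directly on a presentation $0\to K\to F\to I\to 0$ of a finitely generated ideal $I$: coherence of $R_S$ makes $K_S$ finitely generated over $R_S$, one lifts the finitely many generators $\frac{k_i}{s_i}$ back to $K$, and the maximal multiple $t$ absorbs all denominators so that $tK\subseteq\langle k_1,\dots,k_n\rangle$; no ring decomposition appears. You instead make the structural content of the maximal multiple condition explicit: $e=ta$ is an idempotent with $R\cong Re\times R(1-e)$, $Re\cong R_S$ and $t(1-e)=0$, so $R\to R_S$ is a split retraction onto a direct factor whose complement is $t$-torsion; you then verify the Chase-type criterion of Lemma \ref{chaseresu}(3) componentwise, using the classical Chase theorem on the coherent factor $R_S$ and the triviality of $S$-finiteness on the $t$-torsion factor. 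Both arguments are sound. Your version buys a cleaner conceptual picture and in fact yields the equivalence ($R$ is $S$-coherent if and only if $R_S$ is coherent, under the maximal multiple condition), at the cost of some bookkeeping about ideals of a product ring; the paper's version is shorter and needs only the one localization computation. One small remark: you do not actually need $a\in S$ (hence you do not need the saturation reduction made in Lemma \ref{mfp}) --- the identity $t=t^2a$ with $a\in R$ already makes $ta$ idempotent and kills the complementary factor, which is all your argument uses.
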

\begin{proof} We may assume $S$ is saturated.	Let $I$ be a finitely generated ideal of $R$. Then there is a short exact sequence $$0\rightarrow K\rightarrow F\rightarrow I\rightarrow 0$$ where $F$ is a finitely generated free $R$-module. Since $R_S$ is a coherent ring, then $K_S$ is a finitely generated $R_S$-module. We may assume  $K_S$ is generated by $\{\frac{k_1}{s_1},\dots,\frac{k_n}{s_n}\}$ with each $k_i\in K$ and each $s_i\in S$.  Let $k\in K$. Then $$\frac{k}{1}=\sum\limits_{i=1}^n\frac{r_i}{s'_i}\frac{k_i}{s_i}.$$
	Since  $S$ satisfies the maximal condition,  then exist $t\in S$ such that $s|t$  for any $s\in S$. One can easy check that $tk\in \langle k_1,\dots,k_n\rangle\subseteq K.$ Consequently, $K$ is $S$-finite. It follows that $R$ is  an $S$-coherent ring.	
\end{proof}

However, the condition that $R_S$ is a coherent ring does not imply that $R$ is  an $S$-coherent ring in general.

\begin{example}\label{s-cohRs} Let $D$ be an non-field integral domain and $Q$ its quotient field. Set $R=D(+)Q$ the idealization of $D$ with $Q$ $($see \cite{aw09}$).$ Let $S=\{(d,0)\mid d\not=0\}$. Then $S$ is a multiplicative subset of $R$. It follows by \cite[Theorem 4.1]{aw09} that  $R_S\cong Q(+)Q$, which is a Noetherian ring, and hence a coherent ring by \cite[Proposition 2.2]{aw09}.
	
We claim $R$ is not an $S$-coherent ring. Indeed, let $q$ be a nonzero element in $Q$. Then we have $$(0:_R(0,q))=0(+)Q.$$ 
Since $D$ is not a field, the quotient field $Q$ is not finitely generated as a $D$-module. So $0(+)Q$ is not finitely generated as an $R$-module. Note that $(d,0)0(+)Q=0(+)Q$  for any $(d,0)\in S$. Consequently, $0(+)Q$ is not $S$-finite over $R$. It follows by Lemma \ref{chaseresu} that $R$ is not an $S$-coherent ring.
\end{example}

 Recall from  Qi et al. \cite{qwz23} that  an $R$-homomorphism $f:M\rightarrow N$ is called an $S$-monomorphism (resp. $S$-epimorphism, $S$-isomorphism) if the induced $R_S$-homomorphism $f_S:M_S\rightarrow N_S$ is a monomorphism (resp. an epimorphism, an isomorphism); a sequence $0\rightarrow M\rightarrow N\rightarrow L\rightarrow 0$ is said to be  $S$-exact if the induced sequence $0\rightarrow M_S\rightarrow N_S\rightarrow L_S\rightarrow 0$ is exact; and an $R$-module $M$ is said to be $S$-flat if for any finitely generated ideal $I$ of $R$, the natural homomorphism $I\otimes_RM\rightarrow R\otimes_RM$ is an $S$-monomorphism. It follows by \cite[Proposition 2.6]{qwz23} that an $R$-module $M$ is $S$-flat if and only if $M_S$ is a flat $R_S$-module. They \cite{qwz23} characterized $S$-coherent rings in terms of $S$-flat modules, which can be seen as an $S$-version of Chase's Theorem:

\begin{lemma}\cite[Theorem 4.4]{qwz23}
 A ring $R$ is an $S$-coherent ring if and only if  any product of flat $R$-modules is $S$-flat.
 \end{lemma}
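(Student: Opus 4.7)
The plan is to mimic Chase's classical proof that coherence is equivalent to flatness of arbitrary products of flat modules, tracking $S$-torsion at every step. Both directions rest on the bridge provided by finitely presented approximations: by $S$-coherence, each finitely generated ideal is covered by a finitely presented module with kernel killed by a single $s \in S$, and finitely presented modules commute with arbitrary products under tensor.

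For the forward direction, let $\{F_\lambda\}_{\lambda\in\Lambda}$ be flat $R$-modules and $J$ a finitely generated ideal of $R$. $S$-coherence yields an exact sequence $0 \to K \to R^n \to J \to 0$ together with an $s \in S$ and a finitely generated $K_0 \subseteq K$ with $sK \subseteq K_0$. Set $M := R^n/K_0$; then $M$ is finitely presented and the induced surjection $M \twoheadrightarrow J$ has kernel $K/K_0$ annihilated by $s$. Form the commutative square
\[
\begin{array}{ccc}
M \otimes_R \prod_\lambda F_\lambda & \xrightarrow{\ \cong\ } & \prod_\lambda (M \otimes_R F_\lambda) \\
\downarrow & & \downarrow \\
J \otimes_R \prod_\lambda F_\lambda & \xrightarrow{\ \psi\ } & \prod_\lambda (J \otimes_R F_\lambda),
\end{array}
\]
whose top arrow is an isomorphism (finite presentation of $M$) and whose verticals are surjective. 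A lift-and-chase shows every $x \in \ker \psi$ is killed by $s$: lift $x$ to $y = (y_\lambda) \in \prod_\lambda (M \otimes F_\lambda)$, note each $y_\lambda$ lies in the image of $(K/K_0) \otimes F_\lambda$ which is annihilated by $s$, so $sy = 0$ and hence $sx = 0$. Flatness of each $F_\lambda$ gives $\prod_\lambda (J \otimes F_\lambda) \hookrightarrow \prod_\lambda F_\lambda$, so the kernel of $J \otimes \prod_\lambda F_\lambda \to \prod_\lambda F_\lambda$ coincides with $\ker \psi$ and is killed by $s$. This is precisely $S$-flatness of $\prod_\lambda F_\lambda$ tested at $J$.

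For the converse, assume every product of flat modules is $S$-flat. By Lemma \ref{chaseresu} it suffices to show $(I :_R a)$ is $S$-finite for every $a \in R$ and every finitely generated ideal $I$. I would adapt Chase's original trick: take $\Lambda = (I :_R a)$, consider the flat (hence $S$-flat) module $R^\Lambda$, and apply the $S$-flatness criterion to the ``diagonal'' element $(b)_{b \in \Lambda}$ sitting inside an appropriate tensor product involving the ideal $I$. The resulting equation modulo $S$-torsion should express $s\cdot b$, for every $b \in \Lambda$, as an $R$-linear combination of finitely many fixed elements $b_1, \ldots, b_m \in \Lambda$ with a single $s \in S$ serving uniformly, which is exactly $S$-finiteness.

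The main obstacle I anticipate is this converse: the classical argument extracts a finite generating set from an \emph{injectivity} statement, and the delicate point here is to ensure that the weaker hypothesis of $S$-injectivity still produces \emph{one} element $s \in S$ valid for all $b \in \Lambda$ simultaneously, rather than a family of annihilators indexed by $\Lambda$. This uniformity is exactly what comes out of testing $S$-flatness against the whole product $R^\Lambda$ in one stroke.
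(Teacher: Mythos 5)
The paper does not prove this lemma itself (it is quoted from \cite[Theorem 4.4]{qwz23}), so your attempt can only be judged on its own terms and against the closely related argument the paper gives for Theorem \ref{maxcond}. Your forward direction is correct and complete: replacing the finitely generated ideal $J$ by the finitely presented approximation $M=R^n/K_0$ with $s(K/K_0)=0$, using that finitely presented modules commute with products under tensor, and chasing the square to see that $\ker\psi$ is killed by the single $s$ coming from $S$-finiteness of $K$ is exactly the right mechanism; it is the same device the paper uses (with the extra complication of $S$-flat rather than flat factors) in the proof of Theorem \ref{maxcond}.

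The converse, however, is a genuine gap: you describe a plan and explicitly flag its key difficulty without resolving it, so that direction is not proved. The missing content is the precise translation of ``the kernel of $I\otimes_R R^\Lambda\to R^\Lambda$ is $S$-torsion'' into a uniform equation. For the case $(0:_Ra)$ it can be closed as follows: take $\Lambda=(0:_Ra)$ and $\xi=(b)_{b\in\Lambda}\in R^\Lambda$. Since $a\xi=0$ and $R^\Lambda$ is $S$-flat, the element $a\otimes\xi\in aR\otimes_R R^\Lambda$ lies in the ($S$-torsion) kernel of $aR\otimes_R R^\Lambda\to R^\Lambda$, so $s(a\otimes\xi)=0$ for some $s\in S$. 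Using $aR\cong R/(0:_Ra)$, hence $aR\otimes_R R^\Lambda\cong R^\Lambda/(0:_Ra)R^\Lambda$, this says $s\xi\in(0:_Ra)R^\Lambda$, i.e.\ $s\xi=\sum_{j=1}^m r_jy_j$ with $r_j\in(0:_Ra)$; reading off the $b$-th coordinate gives $sb\in\langle r_1,\dots,r_m\rangle$ for every $b$, which is $S$-finiteness with a single $s$. Note this single element of the product is what delivers the uniformity you were worried about, but the identification $aR\otimes_R R^\Lambda\cong R^\Lambda/(0:_Ra)R^\Lambda$ is the step your sketch omits, and the case $(I:_Ra)$ with $I\neq 0$ (or, via Lemma \ref{chaseresu}(3), the $S$-finiteness of intersections of two finitely generated ideals) requires a further, similar but not identical, argument that you have not supplied.
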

 
 Let $M$ be an $R$-module and $\mathscr{P}$ be a  class  of $R$-modules. Recall from \cite{EJ11} that  an $R$-homomorphism $f: M\rightarrow P$ with  $P\in \mathscr{P}$ is said to be a $ \mathscr{P}$-preenvelope  provided that  for any $P'\in \mathscr{P}$, the natural homomorphism  $\Hom_R(P,P')\rightarrow \Hom_R(M,P')$ is an epimorphism. 
 
It is well-known that a ring $R$ is coherent if and only if any product of flat $R$-modules is flat, if and only if any $R$-module has a flat preenvelope (see \cite[Theorem 3.2.24, Proposition 6.5.1]{EJ11}). In general, we can use $S$-flat preenvelopes of $R$-moudules to construct flat preenvelopes of $R_S$-moudules.

\begin{proposition}\label{locapreenve}
	Let  $S$ be a  multiplicative subset of a ring $R$, and $M$ be an $R$-module. Suppose $f:M\rightarrow F$ is an $S$-flat preenvelope  of $M$. Then $f_S:M_S\rightarrow F_S$ is flat preenvelope  of $M$  as $R_S$-modules.
\end{proposition}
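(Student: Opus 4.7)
The plan is to check the two defining properties of a flat preenvelope for $f_S\colon M_S\to F_S$: first, that $F_S$ is a flat $R_S$-module; second, that every $R_S$-homomorphism from $M_S$ into a flat $R_S$-module factors through $f_S$. The first is immediate from the characterization recalled just before the statement (\cite[Proposition 2.6]{qwz23}): since $F$ is $S$-flat over $R$, the localization $F_S$ is flat over $R_S$.

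For the second, let $G$ be a flat $R_S$-module and let $g\colon M_S\to G$ be any $R_S$-homomorphism. The main idea is to transport the problem back to $R$. Viewing $G$ as an $R$-module through the canonical ring map $R\to R_S$, the isomorphism $G_S\cong G$ shows $G$ is an $S$-flat $R$-module. Let $\phi_M\colon M\to M_S$ and $\phi_F\colon F\to F_S$ denote the canonical localization maps. Applying the $S$-flat preenvelope property of $f$ to the $R$-homomorphism $g\circ\phi_M\colon M\to G$, we obtain an $R$-homomorphism $h\colon F\to G$ with $h\circ f=g\circ\phi_M$. Because $G$ is already an $R_S$-module, the universal property of localization produces a unique $R_S$-homomorphism $\widetilde{h}\colon F_S\to G$ satisfying $\widetilde{h}\circ\phi_F=h$.

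It remains to verify $\widetilde{h}\circ f_S=g$. Using the naturality relation $f_S\circ\phi_M=\phi_F\circ f$, one computes
\[
(\widetilde{h}\circ f_S)\circ\phi_M=\widetilde{h}\circ\phi_F\circ f=h\circ f=g\circ\phi_M,
\]
so the two $R_S$-linear maps $\widetilde{h}\circ f_S,\,g\colon M_S\to G$ agree after pre-composition with $\phi_M$. Since every element of $M_S$ has the form $\frac{1}{s}\phi_M(m)$, $R_S$-linearity forces $\widetilde{h}\circ f_S=g$, as required. There is no serious obstacle here; the only care needed is the bookkeeping of the universal property of localization, invoked once to manufacture $\widetilde{h}$ from $h$ and once more to check commutativity at the level of $M_S$.
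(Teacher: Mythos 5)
Your proposal is correct and follows essentially the same route as the paper: view the flat $R_S$-module as an $S$-flat $R$-module, pull the test map back along the localization map $M\to M_S$, invoke the $S$-flat preenvelope property of $f$, and then descend the resulting map to $F_S$ via the universal property of localization. If anything, your write-up is the more careful one — the paper's diagram has the factoring arrow drawn in the wrong direction and omits the final check that agreement on the image of $\phi_M$ suffices, which you handle explicitly.
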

\begin{proof} Let $M$ be an $R$-module. We always denote by $i_M:M\rightarrow M_S$ the $R$-homomorphism satisfying $i_M(m)=\frac{m}{1}$ for any $m\in M$. Let $f:M\rightarrow F$ be an $S$-flat preenvelope of $M$. Consider the localization map $f_S:M_S\rightarrow F_S$. We claim $f_S$ is the flat preenvelope of $M_S$ as $R_S$-modules. Indeed, let $g:M_S\rightarrow N$ be an $R_S$-homomorphism with $N$ a flat $R_S$-module. Then $N$ is also an $S$-flat $R$-module, and so there is an $R$-homomorphism $h:N\rightarrow F$ such that the following diagram is commutative:	$$\xymatrix@R=25pt@C=50pt{
		& N\ar@/^1pc/@{.>}[dd]^{h}\\
		M_S\ar[ru]^{g}\ar[r]_{f_S}	&F_S\\
		M\ar[u]^{i_M}\ar[r]_{f}	&\ar[u]^{i_F}F\\}$$
	Note that $h_S=i_Fh$. So $h_Sg=f_S$. Hence $f_S$ is an flat preenvelope.
\end{proof}

Recently, Bennis and Bouziri \cite{BB24} showed that the class of  $S$-flat modules is preenveloping is equivalent to that the class of $S$-flat modules is closed under direct products:
 \begin{lemma}\label{dirsfpr}\cite[Theorem 4.2]{BB24}
The class of $S$-flat modules is closed under direct products if and only if every $R$-module has an $S$-flat preenvelope. 
 \end{lemma}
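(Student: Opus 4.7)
The plan is to prove each direction separately. For the direction that existence of $S$-flat preenvelopes implies closure under direct products, I would use a retraction argument. Given a family $\{F_i\}_{i\in I}$ of $S$-flat modules, let $f\colon \prod_{i\in I} F_i \to F$ be an $S$-flat preenvelope of the product. For each $i$, the canonical projection $\pi_i\colon \prod_{j\in I}F_j \to F_i$ factors as $g_i\circ f$ for some $g_i\colon F \to F_i$, and the tuple $(g_i)\colon F \to \prod_{i\in I}F_i$ satisfies $(g_i)\circ f=\mathrm{id}$. Thus $\prod_i F_i$ is a direct summand of $F$. Since an $R$-module is $S$-flat if and only if its localization is flat over $R_S$, and flatness over $R_S$ is stable under direct summands, the class of $S$-flat modules is closed under direct summands, and hence $\prod_i F_i$ is $S$-flat.

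For the converse, I would rely on the classical solution-set criterion together with closure under products. Fix an $R$-module $M$ and set $\kappa=\max\{|M|,|R|,\aleph_0\}$. Let $\mathcal{X}$ be a representative set for the isomorphism classes of $S$-flat $R$-modules of cardinality at most $\kappa$, and form the product $F=\prod_{(X,h)} X$ indexed by all pairs $(X,h)$ with $X\in\mathcal{X}$ and $h\in\Hom_R(M,X)$. Closure under products guarantees that $F$ is $S$-flat, and the natural map $f\colon M\to F$ defined by $f(m)=(h(m))_{(X,h)}$ is the candidate for an $S$-flat preenvelope of $M$.

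The main obstacle is to verify that every homomorphism $g\colon M\to N$ into an $S$-flat module $N$ factors through $f$. To this end, I would invoke the standard L\"owenheim--Skolem-type fact that every subset of cardinality at most $\kappa$ of an $R$-module is contained in a pure submodule of cardinality at most $\kappa$. Applied to $g(M)\subseteq N$, this yields a pure submodule $N_0\subseteq N$ with $g(M)\subseteq N_0$ and $|N_0|\le\kappa$. Because purity is preserved under localization, $(N_0)_S$ is a pure $R_S$-submodule of the flat $R_S$-module $N_S$, and pure submodules of flat modules are flat; hence $N_0$ is $S$-flat. Therefore $N_0$ is isomorphic to some $X\in\mathcal{X}$, and the corestriction $\bar g\colon M\to X$ is one of the index maps of $F$. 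Then $g$ equals the composite $M\xrightarrow{f}F\xrightarrow{\pi_{(X,\bar g)}}X\cong N_0\hookrightarrow N$, which is the required factorization through $f$ and completes the verification that $f$ is an $S$-flat preenvelope.
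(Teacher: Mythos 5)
The paper does not actually prove this lemma---it is imported verbatim from \cite[Theorem 4.2]{BB24}---so there is no internal proof to compare against, but your argument is correct and is the standard one for such statements. The forward direction via the retraction/split-monomorphism argument is exactly right, and for the converse your Enochs-style solution-set construction works because, as you verify, the class of $S$-flat modules is closed under pure submodules (purity is preserved by localization and pure submodules of flat $R_S$-modules are flat), which together with closure under products is precisely the classical criterion for a class to be preenveloping.
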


They  \cite{BB24} also show that the class of  $S$-flat modules is preenveloping implies $R$ is an $S$-coherent ring.

\begin{lemma}\cite[Corollary 4.3]{BB24}
If every $R$-module has an $S$-flat preenvelope, then $R$ is an $S$-coherent ring.
\end{lemma}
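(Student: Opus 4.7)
The plan is to chain together the three lemmas stated just before this one. By the hypothesis and Lemma \ref{dirsfpr}, the class of $S$-flat $R$-modules is closed under arbitrary direct products. The goal is then to deduce from this closure property that $R$ is $S$-coherent via the $S$-version of Chase's Theorem (the lemma of Qi et al.\ quoted above), which asserts that $R$ is $S$-coherent precisely when every product of flat $R$-modules is $S$-flat.

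First I would record the elementary observation that every flat $R$-module is $S$-flat. Indeed, if $M$ is flat then for any finitely generated ideal $I$ of $R$ the canonical map $I\otimes_R M\to R\otimes_R M$ is already a monomorphism, hence a fortiori an $S$-monomorphism, so $M$ is $S$-flat by definition. (Equivalently, $M_S$ is a localization of a flat $R$-module at a multiplicative set, hence a flat $R_S$-module, so $M$ is $S$-flat by \cite[Proposition 2.6]{qwz23}.)

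Next, given any family $\{F_i\}_{i\in I}$ of flat $R$-modules, each $F_i$ is $S$-flat by the previous step, so the product $\prod_{i\in I} F_i$ is $S$-flat because the class of $S$-flat modules is closed under products. By the characterization of $S$-coherence from \cite[Theorem 4.4]{qwz23}, this forces $R$ to be an $S$-coherent ring, which is what we wanted. There is essentially no obstacle: the argument is a three-line composition of the already-quoted results, and the only thing to verify explicitly is the (trivial) implication ``flat $\Rightarrow$ $S$-flat'' which bridges Lemma \ref{dirsfpr} and the Qi--Wang--Zhang criterion.
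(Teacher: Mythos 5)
Your argument is correct: the paper itself only cites this result from \cite{BB24} without proof, and your derivation (preenveloping $\Rightarrow$ closed under products via Lemma \ref{dirsfpr}, flat $\Rightarrow$ $S$-flat, then the Chase-type criterion of \cite[Theorem 4.4]{qwz23}) is exactly the intended route, consistent with the result being stated as a corollary of the products characterization in the source. The bridging observation that flat modules are $S$-flat is the only point needing verification, and you handle it correctly.
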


Subsequently, they  \cite{BB24} asked if the converse of the above result is also true?
\begin{question} \cite{BB24}
Suppose $R$ is an $S$-coherent ring. Does every $R$-module have an $S$-flat preenvelope? 
\end{question}

Trivially, If $R_S$ is a von Neumann regular ring, then the above question is true. 
It follows by \cite[Proposition 4.3]{BB24} that if $R_S$  is a finitely presented  $R$-module, then the above question is true. However, we will show that the above question is not true even for coherent rings.

\begin{example}\label{vd} Let $R$ be a valuation domain, where its valuation group  $G=\mathbb{Z}\times \mathbb{Z}$ with lexicographical order and $v: R\rightarrow G$ is the valuation map. Let $x,y\in R$ with $v(x)=(0,1)$ and $v(y)=(1,0)$. Then for any $i\geq 1$, there exists $b_i\in R$ such that $y=b_ix^i$. Let $S=\{x^i\mid i\geq 0\}.$ Then $R_S$ is a discrete valuation domain. And so every torsion-free $R_S$-module is exactly a flat $R_S$-module. Since $(R/x^iR)_S=0$ as $R_S$-module, each $R/x^iR$ is an $S$-flat $R$-module.
Set $$M=\prod\limits_{i\geq 1}R/x^iR.$$	
We claim that $M_S$ is not flat $R_S$-module. Indeed, let $m=(\overline{1},\overline{1},\dots,\overline{1},\dots)\in M$ where the $i$-th component is the image of $1$ in $R/x^iR$. Let $\phi:M\rightarrow M_S$ and $\psi:R\rightarrow R_S$ be the natural map. Note that for any $x^i\in S$, we have $x^i m\not=0$. And so $\phi(m)\not=0$ in $M_S$. However, we have the following equality in $M_S$: $$\psi(y)\phi(m)=\phi(ym)=\phi((\overline{y},\overline{y},\dots,\overline{y},\dots))=\phi((\overline{b_1x},\overline{b_2x^2},\dots,\overline{b_ix^i},\dots))=\phi(0)=0.$$
Since $\psi(y)\not=0$ in $R_S$, $\phi(m)$ is a nonzero torsion element in $M_S$. Thus $M_S$ is not a torsion-free $R_S$-module, and thus is not a flat $R_S$-module. Consequently, $M$ is not an $S$-flat $R$-module. It follows by Lemma \ref{dirsfpr} that  every $R$-module does not have an $S$-flat preenvelope.
\end{example}

\begin{remark}
The  Example \ref{vd} also shows that the converse of Proposition \ref{locapreenve} is not true in general. Indeed, let $R$ be the ring in Example \ref{vd}, and  $N$ be an $R$-module does not have an $S$-flat preenvelope. However, since $R_S$ is coherent, there is an flat preenvelope $f:N_S\rightarrow F$ as $R_S$-modules. But  $fi_N:N\rightarrow N_S\rightarrow F$ is not an $S$-flat preenvelope as $R$-modules.
\end{remark}

It follows by Lemma \ref{mfp} and \cite[Proposition 4.4]{BB24}  that if $S$  satisfies the maximal condition, then  a ring  $R$ being $S$-coherent ring implies that every $R$-module have an $S$-flat preenvelope. In the finial, we will give a direct proof of this result.

\begin{theorem}\label{maxcond}
Let $S$ be a multiplicative subset of a ring $R$ satisfying the maximal condition.  Suppose $R$ is an $S$-coherent ring. Then every $R$-module have an $S$-flat preenvelope. 
\end{theorem}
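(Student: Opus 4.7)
By Lemma \ref{dirsfpr}, it suffices to show that, under the maximal condition on $S$ together with the $S$-coherence of $R$, the class of $S$-flat $R$-modules is closed under arbitrary direct products. My plan is to exploit the rigid structure that the maximal condition forces on $R_S$, reducing the question to the classical Chase theorem over $R_S$.

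First, I would put the maximal condition into a usable form. Replacing $S$ by its saturation, I reuse the computation in the proof of Lemma \ref{mfp}: there exist $t\in S$ witnessing the maximal condition and $a\in S$ with $t=t^2a$, so that $e:=ta$ is an idempotent of $R$ and $R_S\cong eR$ as rings. Crucially, for every $R$-module $M$ one obtains $M_S\cong eM$ naturally, with the localization map realised as $m\mapsto em$.

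Next I would derive the key consequence: under these hypotheses, localization at $S$ commutes with arbitrary direct products. Since multiplication by the idempotent $e$ on a product is computed componentwise, $e\prod_{i\in I}M_i=\prod_{i\in I}eM_i$, whence $(\prod_{i\in I}M_i)_S\cong\prod_{i\in I}(M_i)_S$ naturally as $R_S$-modules. This is precisely the feature of the maximal condition that powers the argument; for a general multiplicative set, localization need not commute with infinite products.

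Finally, let $\{M_i\}_{i\in I}$ be any family of $S$-flat modules. Each $(M_i)_S$ is then flat over $R_S$, and since $R$ is $S$-coherent, $R_S$ is coherent by \cite[Remark 3.4]{B24}, so Chase's theorem gives that $\prod_{i\in I}(M_i)_S$ is flat over $R_S$. Combined with the commutation of localization with products from the previous step, this yields that $(\prod_{i\in I}M_i)_S$ is flat over $R_S$; in other words $\prod_{i\in I}M_i$ is $S$-flat, which is exactly what we needed. The main obstacle I anticipate is the clean verification of the middle step, since this is what distinguishes the maximal condition from the general setting and is really where the hypothesis is consumed.
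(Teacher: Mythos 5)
Your proposal is correct, but it takes a genuinely different route from the paper's argument. The paper gives a ``direct'' proof: it takes a family $\{F_i\}$ of $S$-flat modules and a finitely generated (hence $S$-finitely presented) ideal $I$, compares $K\otimes_R\prod_i F_i$ with $\prod_i(K\otimes_R F_i)$ via the natural maps in two commutative diagrams, and tracks which kernels and cokernels are annihilated by the maximal element $s$, using the surjectivity of $L\otimes_R\prod_i F_i\to\prod_i(L\otimes_R F_i)$ for $L$ finitely generated and a five-lemma-type result from \cite{zuswgld-23}; this stays entirely at the level of $R$-modules and only uses the maximal condition to convert ``$S$-torsion'' into ``killed by a single $s$''. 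You instead extract the full ring-theoretic content of the maximal condition: $R_S\cong eR$ for an idempotent $e=ta$, so that $M_S\cong eM$ functorially and localization commutes with arbitrary products; the problem then collapses to Chase's theorem over the coherent ring $R_S$. Your route is shorter and conceptually cleaner, and it is essentially the indirect argument the paper itself alludes to just before the theorem (via Lemma \ref{mfp} and \cite[Proposition 4.4]{BB24}), made self-contained. Two small points you should make explicit: (i) replacing $S$ by its saturation changes neither the class of $S$-flat modules nor the maximal condition, since $R_S=R_{\overline{S}}$ and any element of $\overline{S}$ still divides $t$; and (ii) the identification $R_S\cong eR$ requires checking that $r\mapsto er$ has kernel exactly the $S$-torsion of $R$ and hits all of $R_S$ --- both follow because $t$ is invertible in $eR$ with inverse $ea$ and $er=0$ forces $tr=t^2ar=0$. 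With those verifications written out, your proof is complete; what the paper's longer diagram chase buys in exchange is an argument that does not pass through the splitting $R\cong eR\oplus(1-e)R$ and so displays more explicitly how the hypothesis enters at the level of tensor products.
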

\begin{proof}
 Let $s\in S$ such that $s'|s$ for any $s'\in S$.	 Suppose $R$ is an $S$-coherent ring. Let $\{F_i\mid i\in \Lambda\}$ be a family of $S$-flat $R$-modules and $I$ a finitely generated ideal of $R$. Then $I$ is $S$-finitely presented. So we have an exact sequence $0\rightarrow K\xrightarrow{g} F\xrightarrow{f} I\rightarrow  0$ with $K$ $S$-finite and $F$ finitely generated free. It follows by \cite[Lemma 2.4]{BB24} that $0\rightarrow K\otimes_RF_i\xrightarrow{g\otimes_RF_i} F\otimes_RF_i\xrightarrow{f\otimes_RF_i} I\otimes_RF_i\rightarrow  0$ is an  $S$-exact sequence, that is, $\Ker(g\otimes_RF_i)$ is $S$-torsion. And so $s\Ker(g\otimes_RF_i)=0$.
 
 Consider the following commutative diagram with exact rows:
 	$$\xymatrix@R=25pt@C=55pt{
 		 K\otimes_R \prodi F_i\ar[d]_{\alpha}\ar[r]^{g\otimes_R\prodi(F_i)} & F\otimes_R  \prodi F_i \ar[d]_{\gamma}^{\cong}\ar[r]^{f\otimes_R\prodi(F_i)} & I\otimes_R  \prodi F_i \ar[d]^{\beta}\ar[r]^{} &  0\\
 	\prodi (K\otimes_R F_i )\ar[r]^{\prodi(g\otimes_RF_i)} &\prodi (F\otimes_R F_i ) \ar[r]^{\prodi(f\otimes_RF_i)} & \prodi (I\otimes_R F_i) \ar[r]^{} &  0.\\}$$
 Note that $s\Ker(\prodi(g\otimes_RF_i))=s\prodi\Ker(g\otimes_RF_i)=0.$

Since $K$ is $S$-finite, there is an exact sequence $0\xrightarrow{} L\xrightarrow{a} K\xrightarrow{b} T\rightarrow0$ with $L$ finitely generated and $sT=0$.  Consider the following commutative diagram with exact rows:
 	$$\xymatrix@R=25pt@C=45pt{
 L\otimes_R \prodi F_i\ar[d]_{\delta}\ar[r]^{a\otimes_R\prodi(F_i)}& K\otimes_R  \prodi F_i \ar[d]_{\alpha}\ar[r]^{b\otimes_R\prodi(F_i)}  & T\otimes_R  \prodi F_i \ar[d]^{\sigma}\ar[r]^{} &  0\\
 	\prodi (L\otimes_R F_i )\ar[r]^{\prodi(a\otimes_RF_i)} &\prodi (K\otimes_R F_i ) \ar[r]^{\prodi(b\otimes_RF_i)} & \prodi (T\otimes_R F_i) \ar[r]^{} &  0.\\}$$
Certainly, 	$s\Ker(\prodi(a\otimes_RF_i))=0,$ and $s\Ker(a\otimes_R\prodi(F_i))=0$ as $$s\Ker(a\otimes_R\prodi(F_i))\subseteq s\Im(\Tor_1^R(T,\prodi F_i)\rightarrow
L\otimes_R \prodi F_i)=0.$$ Since $L$ is finitely generated, $\delta$ is an epimorphism by \cite[Theorem 2.6.10(1)]{fk16}.  It follows by \cite[Theorem 1.2]{zuswgld-23} that  $s\Coker(\alpha)=0$ and thus $s\Ker(\beta)=0$. 
So, $\beta$ is an $S$-monomorphism.  Consequently, $\prodi F_i$ is an $S$-flat $R$-module. The result follows by \cite[Theorem 4.2]{BB24}
\end{proof}

%Let $M$ be an $R$-module. We always denote by $i_M:M\rightarrow M_S$ the $R$-homomorphism satisfying $i_M(m)=\frac{m}{1}$ for any $m\in M$.

%\begin{theorem}
% Let  $S$ be a  multiplicative subset of a ring $R$, and $M$ be an $R$-module. Suppose $f:M\rightarrow F$ is an $S$-flat preenvelope $($resp., envelope$)$ of $M$. Then $f_S:M_S\rightarrow F_S$ is flat preenvelope  $($resp., envelope$)$ of $M$  as $R_S$-modules.
%\end{theorem}
%\begin{proof} Let $f:M\rightarrow F$ be an $S$-flat preenvelope of $M$. Consider the localization map $f_S:M_S\rightarrow F_S$. We claim $f_S$ is the flat preenvelope of $M_S$ as $R_S$-modules. Indeed, let $g:M_S\rightarrow N$ be an $R_S$-homomorphism with $N$ a flat $R_S$-module. Then $N$ is also an $S$-flat $R$-module, so there is an $R$-homomorphism $h:N\rightarrow F$ such that the following diagram is commutative:	$$\xymatrix@R=25pt@C=50pt{
%	& N\ar@/^1pc/@{.>}[dd]^{h}\\
%		M_S\ar[ru]^{g}\ar[r]_{f_S}	&F_S\\
%		M\ar[u]^{i_M}\ar[r]_{f}	&\ar[u]^{i_F}F\\}$$
% Note that $h_S=i_Fh$. So $h_Sg=f_S$. Hence $f_S$ is an flat preenvelope.

%Now let $f:M\rightarrow F$ be an $S$-flat envelope of $M$. We claim $f_S$ is the flat envelope of $M_S$ as $R_S$-modules.
%Consider the following commutative diagram  
%		$$\xymatrix@R=20pt@C=60pt{
%		&	& F\ar[d]^{i_F}	\\
%	&	& F_S\ar[dd]^{g}\\
%M\ar[r]^{i_M}\ar[rruu]^f\ar[rrdd]_f&	M_S\ar[rd]_{f_S}\ar[ru]^{f_S}	&\\
%		& &F_S\ar@{.>}[d]^{h???}\\
%	& &F\\
%}$$
%Since $f$ is 	$S$-flat preenvelope of $M$, and $F_S$ is an $S$-flat $R$-module. 
	
%\end{proof}

\end{document}